\def\@MRExtract#1 #2!{#1}
\newcommand{\MR}[1]{%
  \xdef\@MRSTRIP{\@MRExtract#1 !}%
  \href{https://mathscinet.ams.org/mathscinet-getitem?mr=\@MRSTRIP}{MR\@MRSTRIP}}
\theoremstyle{plain}
\newtheorem{theorem}{Theorem}[section]
\newtheorem{lemma}[theorem]{Lemma}
\newtheorem{proposition}[theorem]{Proposition}
\theoremstyle{definition}
\theoremstyle{remark}
\newtheorem{remark}[theorem]{Remark}
\numberwithin{equation}{section}
\title{The modified boundary contact process:\\
invariant measures and critical extinction}
\author{Célio Terra\\[0.4em]
\small Universidade Federal de Minas Gerais, Brasil\\
\small Current affiliation: Universidade Federal do Rio de Janeiro, Brasil\\
\small \texttt{caugusto.terra@gmail.com}}
\date{}
\begin{document}

\maketitle
\thispagestyle{plain}

\begin{abstract}
\noindent
We study the one-dimensional modified boundary contact process, in which infections across the two boundary edges of the infected region occur at rate \(\lambda_e\), while all other infections occur at rate \(\lambda_i\). We prove two results in different parts of the phase diagram. First, in the non-attractive region $\lambda_e>\lambda_i\geq\lambda_c$, where $\lambda_c$ is the critical parameter of the standard contact process, the process seen from its rightmost infected site converges from every semi-infinite initial configuration to an invariant measure. Second, on the critical curve in the attractive region, the infection dies out almost surely.
\end{abstract}

\smallskip

\section{Introduction}
	The Harris contact process, introduced in~\cite{Harris74}, is a widely studied model for the spread of infections. Each site in $\mathbb{Z}$ can be either \emph{infected} or \emph{healthy}. Each infected site becomes healthy at rate $1$ and attempts to infect each nearest neighbour at rate $\lambda>0$; if the chosen neighbour is healthy, it becomes infected. We assume familiarity with the basic concepts and results about the contact process, as presented for instance in~\cite[Chapter VI]{Liggett05} or in~\cite{Valesin2024}.

	Many modifications of the basic contact process have been proposed. One such modification, introduced by Durrett and Schinazi in~\cite{DurrettSchinazi00}, is the \emph{contact process with modified boundary}, which introduces two parameters, $(\lambda_i, \lambda_e)$. Infections to the right of the rightmost site and to the left of the leftmost site occur at rate $\lambda_e$, while infections at other sites occur at rate $\lambda_i$. Thus, there is an \emph{internal} infection rate, $\lambda_i$, and an \emph{external} one, $\lambda_e$. This process appears in the literature under slightly different names. Durrett and Schinazi~\cite{DurrettSchinazi00} use the term \emph{boundary modified contact process}, while Andjel and Rolla~\cite{AndjelRolla23} refer to related formulations as contact processes with enhancements. In this paper we use the name \emph{modified boundary contact process} throughout.
	
	Given a measure $\mu$ on the power set of $\mathbb{Z}$, we denote by $\xi^{\mu}=(\xi_t^{\mu})_{t \ge 0}$ a contact process with parameters $(\lambda_i, \lambda_e)$ and initial condition $\xi^{\mu}_0$ sampled from $\mu$. We denote by $\Psi \xi^{\mu}$ the process $\xi^\mu$ seen from the right edge, meaning the contact process translated so that its rightmost particle is located at the origin. For a subset $A$ of $\mathbb{Z}$, we write $\xi^A$ and $\Psi \xi^A$ for $\xi^{\delta_A}$ and $\Psi \xi^{\delta_A}$, respectively. A formal definition is provided in Section~\ref{section_descriptioncontact}.
	
	In this work, we analyze two questions related to this process. The first concerns the existence of \emph{invariant measures}. In~\cite{Durrett84}, Durrett proved the existence of an invariant measure for the critical and supercritical classical contact process seen from the right edge. Galves and Presutti proved the convergence of the supercritical contact process seen from the edge to the invariant measure in~\cite{GalvesPresutti87}, and this result was extended to the critical case in~\cite{CoxDurrettSchinazi91}. Nonexistence of an invariant measure for the subcritical process was proved in~\cite{Schonmann87} for the discrete-time case and in~\cite{ASS90} for the continuous-time case. However, when conditioned on non-extinction, subcritical contact processes converge to a distribution supported on semi-infinite sets~\cite{AEGR15}.
	
	In the case of the modified boundary contact process, the following result was first proved in~\cite{AndjelRolla23}.
	Denote by $\theta(\lambda_i,\lambda_e)$ the probability that the modified boundary contact process survives forever when at time $0$ there is only one infected particle, and by $\lambda_c$ the critical parameter for the usual contact process on the line.
	\begin{theorem} \label{thm_andjelrolla}
		Suppose $\lambda_e \le \lambda_i$ and $\theta(\lambda_i, \lambda_e)>0$ or $\lambda_e=\lambda_i = \lambda_c$. There is a measure $\mu$ such that for every initial condition $A \subseteq \mathbb{Z}$ with $|A|=+\infty$ and $\sup A < +\infty$, $\Psi \xi^A_t \rightarrow \mu$ weakly. Moreover, $\Psi \xi^{\mu}_t \sim \mu$ for every $t \ge 0$.
	\end{theorem}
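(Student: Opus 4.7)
The statement splits into two regimes. When $\lambda_e = \lambda_i = \lambda_c$ the modified border process is literally the standard critical contact process, so existence of $\mu$ and the convergence $\Psi \xi^A_t \Rightarrow \mu$ follow directly from Durrett~\cite{Durrett84} and Cox--Durrett--Schinazi~\cite{CoxDurrettSchinazi91}. The new case is $\lambda_e \le \lambda_i$ with $\theta(\lambda_i, \lambda_e) > 0$, which in particular forces $\lambda_i > \lambda_c$; for this regime the plan is to adapt the Galves--Presutti scheme for the supercritical contact process seen from the edge, exploiting the fact that the hypothesis $\lambda_e \le \lambda_i$ makes the process attractive.

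The first step is a graphical construction in which attractiveness is transparent: place recovery marks of rate $1$ at every site, ``always-effective'' infection arrows of rate $\lambda_e$ along every directed edge, and independent ``interior-only'' arrows of rate $\lambda_i - \lambda_e \ge 0$ which cause an infection only when their source is not at the boundary of the current configuration. The usual ordering argument then gives $A \subseteq B \Rightarrow \xi^A_t \subseteq \xi^B_t$ for every $t \ge 0$.

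To construct $\mu$, attractiveness makes the laws of $\Psi \xi^{[-N,0]}_t$ stochastically monotone in $N$, and one shows they converge as $N \to \infty$ to the law of $\Psi \xi^{(-\infty,0]}_t$. A restart/subadditivity argument based on $\theta > 0$ gives $r_t/t \to \alpha$ almost surely for some $\alpha > 0$. I would then form the time-averaged laws
\[
\mu_T := \frac{1}{T} \int_0^T \mathrm{Law}\bigl(\Psi \xi^{(-\infty,0]}_t\bigr) \, dt,
\]
obtain tightness of $\{\mu_T\}_{T \ge 1}$ by controlling the density of infected sites of $\Psi \xi^{(-\infty,0]}_t$ in intervals far to the left of the origin, and extract a subsequential weak limit $\mu$, which is invariant for the edge dynamics by the Feller property. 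For a general $A$ with $|A|=\infty$ and, after translation, $\sup A = 0$, attractiveness supplies the sandwich $\xi^{A \cap [-n, 0]}_t \subseteq \xi^A_t \subseteq \xi^{(-\infty, 0]}_t$, and a Galves--Presutti-style coupling that exploits the finite propagation speed of the right edge then yields $\Psi \xi^A_t \Rightarrow \mu$.

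The main obstacle will be the left-tail control of $\Psi \xi^{(-\infty,0]}_t$ needed for tightness of $\{\mu_T\}$. In the standard supercritical contact process this is handled by block/contour techniques that use translation invariance of the rates, which is broken here by the different boundary rate $\lambda_e$. The key step is to show that the boundary modification is effectively confined to a thin moving layer near the right edge, so that deep in the bulk the process can be coupled to a translation-invariant supercritical contact process at rate $\lambda_i$ and inherit its uniform density estimates. Once this input is in hand the remaining Feller and coupling arguments should be routine.
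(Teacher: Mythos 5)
This theorem is not actually proved in the present paper; it is quoted from Andjel--Rolla~\cite{AndjelRolla23}, so there is no internal proof to compare against. Evaluating your sketch on its own merits: the reduction of the case $\lambda_e=\lambda_i=\lambda_c$ to the standard critical contact process (hence to~\cite{Durrett84,CoxDurrettSchinazi91}) is correct, and the graphical construction with rate-$\lambda_e$ ``always-on'' arrows plus rate-$(\lambda_i-\lambda_e)$ ``interior-only'' arrows is a valid way to exhibit attractiveness when $\lambda_e\le\lambda_i$.

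There are, however, genuine gaps in the attractive case. First, your central monotonicity claim --- that the laws of $\Psi\xi^{[-N,0]}_t$ are stochastically increasing in $N$ --- is not justified and is false for the pathwise coupling: $\Psi$ destroys inclusion. If $\xi^A_t=\{3,5\}\subseteq\xi^B_t=\{3,5,10\}$ then $\Psi\xi^A_t=\{-2,0\}$ and $\Psi\xi^B_t=\{-7,-5,0\}$ are incomparable. The standard edge-agreement argument ($r^A_t=r^B_t$ on survival) requires a path starting precisely from the right edge of $A$ to survive, which does not follow from $\xi^A_t\neq\emptyset$ alone; so you cannot invoke monotonicity of $\Psi\xi$ without a separate argument. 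Second, the ``tightness'' discussion is misdirected: $\{0,1\}^{\mathbb Z}$ is already compact, so tightness of $\{\mu_T\}$ is automatic and subsequential limits always exist. What actually needs proof is that any limit charges $\Sigma^{\odot}$, i.e.\ puts no mass on configurations with finitely many occupied sites; this is the content of the paper's Proposition~\ref{prop_fullmeasure}-type argument, which compares the edge drift to the known zero (or nonnegative) velocity of an auxiliary contact process. Your proposed coupling to a rate-$\lambda_i$ contact process bounds the modified process from \emph{above}, which gives density bounds in the wrong direction; one needs a lower comparison (e.g.\ to rate $\lambda_e$ or to a critical process). Third, constructing an invariant $\mu$ via Ces\`aro averages does not by itself give weak convergence of $\Psi\xi^A_t$ itself, nor uniqueness of the limit across all $A\in\Sigma^{\ominus}$ --- these require a coupling/regeneration argument of the type used in~\cite{CoxDurrettSchinazi91} and in the paper's Proposition~\ref{prop_uniquenessmeasure}, which your sketch gestures at (``finite propagation speed'') but does not supply.

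Finally, a smaller point: your restart argument for $r_t/t\to\alpha>0$ is fine in the interior of the survival region but would fail at the boundary; you avoid this because you handle the critical case $\lambda_e=\lambda_i=\lambda_c$ separately, but it is worth stating explicitly that the positivity of $\alpha$ is only asserted when $\theta>0$.
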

	
	When $\lambda_e \le \lambda_i$, the process is \emph{attractive}, meaning that adding more particles to the initial configuration helps the infection survive (see~\cite[Chapter III]{Liggett05} for a formal definition). This follows directly from the construction of the process given in Section~\ref{section_descriptioncontact}. In contrast, when $\lambda_i < \lambda_e$, the process is non-attractive. The tools and results available for studying non-attractive processes are far more limited than those for the attractive case.
	
	The main difference from the classical contact process is that, in the modified boundary process, whether an infection arrow is active depends on the current configuration. Indeed, one must know whether the infected site from which the arrow starts is a boundary site or an internal site, since the corresponding rates are $\lambda_e$ and $\lambda_i$, respectively. This is the point where the classical arguments cannot be used verbatim. In Section~\ref{section_invariantmeasure} we avoid this difficulty by introducing a right-boundary auxiliary process, and in Section~\ref{section_criticaldies} we use attractiveness to justify the restart argument in the block construction.
	
	In this work, we extend Theorem~\ref{thm_andjelrolla} to a non-attractive region. Namely, we treat the case $\lambda_e>\lambda_i\geq\lambda_c$. As in Theorem~\ref{thm_andjelrolla}, the result is stated for semi-infinite initial configurations. We do not treat finite initial configurations here; for such initial states, extinction has positive probability, so any analogue would require a different, for instance conditional, formulation. Our first main result is the following theorem.
	
	\begin{theorem} \label{thm_measure}
		Suppose $\lambda_e>\lambda_i\geq\lambda_c$. There is a measure $\tilde{\mu}$ such that for every initial condition $A \subseteq \mathbb{Z}$ with $|A|=+\infty$ and $\sup A < +\infty$, $\Psi \xi^A_t \rightarrow \tilde{\mu}$ weakly. Moreover, $\Psi \xi^{\tilde{\mu}}_t \sim \tilde{\mu}$ for every $t \ge 0$.
	\end{theorem}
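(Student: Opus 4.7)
The plan is to follow the Krylov-Bogolyubov scheme used by Andjel and Rolla for Theorem~\ref{thm_andjelrolla} in~\cite{AndjelRolla23}, adapting each ingredient to the non-attractive regime $\lambda_i = \lambda_c < \lambda_e$. Fix a convenient reference initial condition such as $A_0 = \mathbb{Z}\cap(-\infty,0]$ and form the Cesàro averages
\[
\nu_T(\cdot) = \frac{1}{T}\int_0^T \mathbb{P}\bigl(\Psi \xi^{A_0}_t \in \cdot\bigr)\, dt
\]
on $\{0,1\}^{\mathbb{Z}\cap(-\infty,0]}$. If this family is tight in $T$, then any weak subsequential limit $\tilde{\mu}$ is invariant for the Feller semigroup of $\Psi\xi$ by the standard argument. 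The rightmost-particle process is well defined here because the hypothesis $\lambda_e > \lambda_c$ gives, via a direct comparison of the boundary dynamics with a supercritical contact process of rate $\lambda_e$, both survival of $\xi^{A_0}$ and a positive asymptotic speed for the rightmost particle.

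For tightness it suffices to show that
\[
\sup_{T \ge 1}\, \nu_T\bigl(\eta \equiv 0 \text{ on } [-k,-1]\bigr) \to 0 \quad \text{as } k \to \infty,
\]
i.e.\ that the density of infected sites in a fixed window behind the edge is bounded below uniformly in time. The natural input is the following: since infections immediately to the right of the rightmost site occur at rate $\lambda_e > \lambda_c$, one can embed a supercritical contact process at the edge by looking rightward of $r_t$; a coupling argument then shows that typical sites at a bounded distance behind the current edge are occupied with a density at least the one given by the upper invariant measure of the rate-$\lambda_e$ contact process. This yields uniform tightness and hence the existence of an invariant $\tilde{\mu}$ as a subsequential limit.

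The genuinely new difficulty, and where attractiveness was used crucially in~\cite{AndjelRolla23}, is upgrading existence of an invariant measure to full weak convergence $\Psi\xi^A_t \to \tilde{\mu}$ along every $t\to\infty$, for every admissible $A$. Without attractiveness, the monotone coupling of $\xi^A$ with $\xi^{A_0}$ is unavailable. My plan is to construct, via the graphical representation, a coupling of two copies started from any two admissible initial conditions $A$ and $B$ and to show that, with probability tending to $1$, their rightmost particles coalesce and the configurations seen from the edge agree on any fixed window $[-k, 0]$. The engine is a renewal structure for the right edge: because $\lambda_e > \lambda_c$ is strictly supercritical, the edge process should admit regeneration times with exponentially integrable tails, at which the configuration in a window behind the edge is a functional of the graphical construction in a bounded space-time neighborhood that has been refreshed from the boundary. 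The main obstacle I foresee is that the critical bulk rate $\lambda_i = \lambda_c$ obstructs standard block arguments in the interior: one must show that the contribution of the critical interior to the configuration near the edge decays as $t\to\infty$, so that only the edge's recent history matters for $\Psi\xi^A_t$. This decoupling of the edge from the critical bulk, together with the accompanying coupling between the two processes, is the hard part.
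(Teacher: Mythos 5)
The proposal follows a Krylov--Bogolyubov scheme rather than the paper's combination of compactness-in-$\Sigma$ plus a CDS91-style uniform coupling estimate, and the differences matter: there are genuine gaps in two of the three steps.

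\textbf{Tightness / concentration on $\Sigma^{\odot}$.} Your proposed argument ``embed a supercritical contact process at the edge by looking rightward of $r_t$'' and then lower-bound the density behind the edge by the upper invariant measure of the rate-$\lambda_e$ contact process does not work in this model. The enhanced rate $\lambda_e$ applies \emph{only} to the instantaneous boundary site; once a site is no longer the rightmost, it infects at the critical rate $\lambda_i = \lambda_c$. There is no persistent supercritical region behind the edge, and the upper invariant measure of the critical classical contact process is trivial, so no density lower bound follows from domination. The paper's Proposition~\ref{prop_fullmeasure} instead uses an ASS90-type drift argument: if sparse windows behind the edge had non-vanishing Ces\`aro mass, the right edge of $\xi^{-}$ would drift left at positive rate (equation~\eqref{eq_boundexpecedge}); but $\mathcal{R}\xi^{-}_t \ge \mathcal{R}\zeta^{-}_t$ for a critical classical process $\zeta^{-}$, whose right edge has zero speed. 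Relatedly, your claim that $\lambda_e>\lambda_c$ yields ``a positive asymptotic speed for the rightmost particle'' is not established anywhere; the paper's Corollary gives only an a.s.\ limit $V$, a priori random and of undetermined sign, and the proof of Proposition~\ref{prop_fullmeasure} relies precisely on the \emph{zero} speed of the critical process, not on a positive speed for $\xi$.

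\textbf{Convergence.} You correctly identify the hard part (decoupling the edge from the critical bulk so that two admissible initial conditions agree near the edge), but the sketch does not resolve it, and the proposed mechanism --- regeneration times for the right edge with exponentially integrable tails --- is unproven and stronger than needed. The paper's Proposition~\ref{prop_uniquenessmeasure} instead works with an auxiliary process $\zeta$ in which the rightmost site infects in \emph{both} directions at rate $\lambda_c+\varepsilon$ (so that a process restarted from the single rightmost particle genuinely sweeps over a window behind the edge), and restarts it from the rightmost particle at the stopping times $\tau_k$ when the restarted single-particle process dies. The only survival input needed is that the edge-started process survives forever with positive probability (from~\cite[Theorem~5]{AndjelRolla23} and the remark following it); this gives $t - \tau_{N(t)} \to \infty$ a.s., which is much weaker than exponential tails but is exactly what closes the argument. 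Without a concrete substitute for this restart-plus-sweep mechanism, the convergence part of your plan is a gap, not a proof.
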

	
	\begin{remark}
		It is natural to ask whether Theorem~\ref{thm_measure} also describes the process started from a finite non-empty set, after conditioning on non-extinction. This question is not addressed here. Such a result would require an additional argument showing that, under the conditioning, the infected set grows far enough to the left of its right edge so that the local dynamics near the right edge can be compared with the dynamics started from a semi-infinite configuration.
	\end{remark}
	
	The technique used in~\cite{AndjelRolla23} to prove invariance of the limiting distribution does not translate immediately to the non-attractive case. The proof of Theorem~\ref{thm_measure} follows the strategy of~\cite[Theorem 1]{CoxDurrettSchinazi91}, but with an auxiliary process whose enhancement acts only at the right boundary. This distinction is important: for semi-infinite configurations there is no left boundary, and therefore the original modified boundary contact process and this right-boundary auxiliary process have the same law after recentering at the right edge.
	
	\begin{remark}
	The proof uses survival of the right-boundary auxiliary process. When $\lambda_i=\lambda_c$ and $\lambda_e>\lambda_c$, this is the one-boundary survival result proved in~\cite[Remark~1.7]{AndjelRolla23}. When $\lambda_i>\lambda_c$, the same auxiliary process dominates the classical contact process with parameter $\lambda_i$, and hence survives. Thus the argument covers the whole region $\lambda_e>\lambda_i\geq\lambda_c$.
	\end{remark}

	Another key aspect of the modified boundary contact process is its phase space. The characteristics of the phase diagram were first studied in~\cite{DurrettSchinazi00}. Andjel and Rolla~\cite{AndjelRolla23} almost entirely characterized the phase diagram, but left open the question of survival on the critical curve in the attractive region. Although this second result concerns the attractive region rather than the non-attractive regime of Theorem~\ref{thm_measure}, it completes the remaining critical-curve part of the phase diagram. We prove the following result.
	
	\begin{theorem} \label{thm_criticaldies}
		Define
		\[\lambda^e_*(\lambda_i)=\inf\{\lambda_e : \theta(\lambda_i, \lambda_e)>0\}.\]
		Then, if $\lambda_i>\lambda_c$, $\theta(\lambda_i, \lambda^e_*(\lambda_i))=0$.
	\end{theorem}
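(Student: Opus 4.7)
The strategy is to adapt the Bezuidenhout--Grimmett (BG) renormalization argument~\cite{BG90} to the modified border process in the attractive region. Writing $\lambda_e^* := \lambda^e_*(\lambda_i)$, I would assume for contradiction that $\theta(\lambda_i, \lambda_e^*) > 0$ and derive $\theta(\lambda_i, \lambda_e^* - \eta) > 0$ for some $\eta > 0$, contradicting the definition of $\lambda_e^*$. Since $\lambda_i > \lambda_c$ and the critical curve lies in the attractive region $\lambda_e^* \le \lambda_i$, the process is attractive, so monotonicity in the initial configuration is available.

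The first step is to establish a BG-style finite space-time block estimate at $(\lambda_i, \lambda_e^*)$: for every $\delta > 0$ there exist integers $L, T$ and a finite ``seed'' configuration $B \subseteq [-L, L]$ such that, starting from any configuration containing $B$ and using only the Harris graphical construction inside a box of spatial width $O(L)$ and temporal height $T$, the process at time $T$ contains translates of $B$ in both $[L, 3L]$ and $[-3L, -L]$, with probability at least $1 - \delta$. This follows from the assumed survival by the usual BG ingredients: use positive survival probability to produce, with positive probability, configurations that are quantitatively large in both space and time, then iterate using attractiveness to drive the failure probability below $\delta$.

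Once this estimate is in place, I would observe that the block event depends only on the restriction of the Harris construction to a finite space-time window, and hence its probability is a continuous function of $\lambda_e$. Thus the estimate persists at $(\lambda_i, \lambda_e^* - \eta)$ with probability at least $1 - 2\delta$ when $\eta > 0$ is small enough. A standard comparison with supercritical $1$-dependent oriented percolation then yields $\theta(\lambda_i, \lambda_e^* - \eta) > 0$, producing the desired contradiction.

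The main obstacle is the adaptation of the BG ``restart'' step to the modified border dynamics. Unlike the standard contact process, the rate $\lambda_e$ applies only at the current \emph{global} rightmost and leftmost sites of the infected set, so when one restarts from spatially translated copies of a block, the translated dynamics are not literally faithful to the original process. In the attractive regime this mismatch is benign: sites that were at the edge of a translated block become interior to the combined configuration, where the rate $\lambda_i \ge \lambda_e$ is at least as large, so the stochastic comparison still goes in the direction that preserves survival. Setting up the block events so that this comparison is rigorous---and handling the dependence between simultaneously restarted blocks so that the resulting percolation is only finite-range dependent---is the technical heart of the argument.
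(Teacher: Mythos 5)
Your proposal takes essentially the same route as the paper: reduce to a Bezuidenhout--Grimmett finite space-time block estimate at $(\lambda_i, \lambda^e_*(\lambda_i))$, use continuity of block-event probabilities to perturb $\lambda_e$ downward, compare with supercritical $1$-dependent percolation, and handle the non-locality of the modified-border rates via attractiveness (which holds because $\lambda^e_*(\lambda_i) < \lambda_i$ by~\cite{AndjelRolla23}). The paper implements exactly this plan via Lemmas~\ref{lemma_connected disk} and~\ref{lemma_infectionstair}, which are the block estimate and iteration/restart steps you outline.
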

	
	By~\cite[Theorems 4 and 5]{AndjelRolla23}, we have that $\lambda_*^e(\lambda_i)< \lambda_i$ when $\lambda_i>\lambda_c$. Since the process is attractive when $\lambda_i \ge \lambda_e$, the proof of Theorem~\ref{thm_criticaldies} closely follows the approach used for the standard critical contact process in~\cite{BG90}. Theorem~\ref{thm_criticaldies} completes the characterization of the phase space for the modified boundary contact process, illustrated in Figure~\ref{fig_phasecontact}.

	\begin{figure}
		\centering 
		\includegraphics[width=0.7\linewidth]{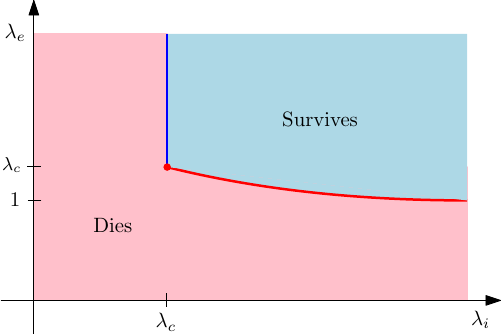}%
		\caption{Phase space for the contact process with modified boundary}%
		\label{fig_phasecontact}%
	\end{figure}

	This note is structured as follows. Section~\ref{section_descriptioncontact} is dedicated to the construction of the modified boundary contact process. Theorem~\ref{thm_measure} is proved in Section~\ref{section_invariantmeasure} and Theorem~\ref{thm_criticaldies} is proved in Section~\ref{section_criticaldies}.
	
	This note is based on the second chapter of the author's PhD thesis~\cite{Terra2024}.
	
	\section{Description of the model} \label{section_descriptioncontact}
	In this section we construct the modified boundary contact process and fix the notation used throughout the paper. The process itself is denoted by $\xi$; the symbol $\zeta$ is reserved for auxiliary processes.
	
	A \emph{configuration} is a subset of $\mathbb{Z}$. We denote by $\Sigma$ the set of all configurations, identified with $\{0,1\}^{\mathbb{Z}}$ and endowed with the product topology. For $A\subseteq\mathbb{Z}$, let $\mathcal{R}A=\sup A$ and $\mathcal{L}A=\inf A$. We also define $
	\Sigma^{\ominus}=\{A\in\Sigma: |A|=+\infty,\ \mathcal{R}A<+\infty\}$ and $\Sigma^{\odot}=\{A\in\Sigma^{\ominus}:\mathcal{R}A=0\}$. If $A\in\Sigma^{\ominus}$, the configuration seen from its right edge is $\Psi A=\{x-\mathcal{R}A:x\in A\}$.
	Thus $\Psi A\in\Sigma^{\odot}$.
	
	We also fix the following notation for cylinder sets in $\Sigma^{\odot}$. If $L<+\infty$ and $\kappa\in\Sigma^{\odot}$, let $C(L,\kappa)={D\in\Sigma^{\odot}:D\cap[-L,0]=\kappa\cap[-L,0]}$. We call such a set a cylinder. These cylinders generate the topology of $\Sigma^{\odot}$.

	We first describe the dynamics for finite configurations. If the current state is $B\neq\emptyset$, each infected site dies at rate $1$. An infected site $x\in B$ gives birth to $x+1$ at rate $\lambda_e$ if $x=\mathcal{R}B$, and at rate $\lambda_i$ otherwise. Similarly, $x$ gives birth to $x-1$ at rate $\lambda_e$ if $x=\mathcal{L}B$, and at rate $\lambda_i$ otherwise. If the target site is already infected, the birth does not change the configuration.
	
	We use the usual graphical construction, in an enlarged form that allows us to couple different values of the parameters. For each $x\in\mathbb{Z}$, let $\omega_x$ be an independent Poisson point process on $(0,+\infty)$ with intensity $1$; its marks are recovery marks. For each oriented nearest-neighbor edge $(x,x\pm1)$, let $\omega_{x,x\pm1}$ be an independent Poisson point process on $(0,+\infty)\times(0,+\infty)$ with intensity $1$. We say that the arrow from $(x,t)$ to $(x\pm1,t)$ is $\lambda$-open if
	\[
	\omega_{x,x\pm1}\cap(\{t\}\times[0,\lambda))\neq\emptyset.
	\]
	The probability measure induced by all these Poisson processes is denoted by $\mathbf{P}$.
	
	Given a finite initial condition $A$, the process $(\xi^A_{s,t})_{t\ge s}$ is obtained from this graphical construction by setting $\xi^A_{s,s}=A$ and applying the transition rules above. Equivalently, at time $t$ an arrow from $x$ to $x+1$ is used with parameter $\lambda_e$ only when $x$ is the current right edge, and with parameter $\lambda_i$ otherwise; the analogous rule holds on the left. If $s=0$, we write $\xi_t^A$ instead of $\xi^A_{0,t}$.
	
	For $A\in\Sigma^{\ominus}$, we define $\xi^A_{s,t}$ by the standard finite-volume approximation. Namely, take finite sets $A_n\uparrow A$ with $\sup A_n=\mathcal{R}A$ for all large $n$, construct $\xi^{A_n}_{s,t}$ with the same graphical representation, and let $n\to\infty$. Since only finitely many Poisson marks can influence a fixed finite space-time region, this gives a well-defined process on every finite time interval. We write $\xi_t^-$ when $A=\mathbb{Z}^-$. The law of the process with parameters $(\lambda_i,\lambda_e)$ is denoted by $\mathbb{P}_{\lambda_i,\lambda_e}$; the subscript is omitted when the parameters are clear.
	
	A function $f:\Sigma\to\mathbb{R}$ is \emph{increasing} if $f(A)\le f(B)$ whenever $A\subseteq B$. Given two random configurations $\xi$ and $\eta$, we say that $\xi$ stochastically dominates $\eta$, and write $\xi\succcurlyeq\eta$, if $\mathbb{E}[f(\xi)]\ge\mathbb{E}[f(\eta)]$ for every bounded increasing function $f$. From the graphical construction, the modified boundary contact process is attractive when $\lambda_e\le\lambda_i$.
	
	We will use the following notion of active path. Fix an initial configuration $A$ and times $s<t$. A path from $(x,s)$ to $(y,t)$ is said to be active if it avoids recovery marks and each of its jumps follows an infection arrow that is allowed for the process started from $A$. More precisely, an arrow from $z$ to $z+1$ at time $u$ is allowed with parameter $\lambda_e$ if $z=\mathcal{R}\xi^A_{s,u^-}$, and with parameter $\lambda_i$ otherwise. The analogous rule holds for arrows from $z$ to $z-1$, using the left edge. We write $(x,s)\rightsquigarrow(y,t)$ when such an active path exists.

	\section{Convergence to invariant measure} \label{section_invariantmeasure}
	
	This section is devoted to the proof of Theorem~\ref{thm_measure}. Throughout the section we assume that $\lambda_e>\lambda_i\geq\lambda_c$.
	
	The first step is to show that the process seen from the right edge forgets its initial condition, uniformly over semi-infinite initial configurations.
	
	\begin{proposition} \label{prop_uniquenessmeasure}
		For every cylinder set $C\subseteq\Sigma^{\odot}$,
		\begin{equation*}
			\lim_{t\to+\infty}
			\left|
			\mathbb{P}_{\lambda_i,\lambda_e}(\Psi\xi_t^A\in C)
			-
			\mathbb{P}_{\lambda_i,\lambda_e}(\Psi\xi_t^B\in C)
			\right|=0,
		\end{equation*}
		uniformly in $A,B\in\Sigma^{\odot}$.
	\end{proposition}
	
	The second step is to prove that any subsequential limit is supported on $\Sigma^{\odot}$.
	
	\begin{proposition}\label{prop_fullmeasure}
		Let $\tilde{\mu}$ be a subsequential weak limit of $(\Psi\xi_t^A)_{t\geq0}$ for some $A\in\Sigma^{\odot}$. Then $\tilde{\mu}(\Sigma^{\odot})=1$.
	\end{proposition}
	
	We now prove Theorem~\ref{thm_measure} assuming Propositions~\ref{prop_uniquenessmeasure} and~\ref{prop_fullmeasure}.
	
	\begin{proof}[Proof of Theorem~\ref{thm_measure}]
		By translation invariance, it is enough to consider initial configurations in $\Sigma^{\odot}$. Fix $A\in\Sigma^{\odot}$. By compactness of $\Sigma$, there is a sequence $(t_k)_k$ with $t_k\to+\infty$ such that $\Psi\xi_{t_k}^A$ converges weakly to a probability measure $\tilde{\mu}$.
		
		We first show that convergence holds for all times, not only along the subsequence. Let $C$ be a cylinder set. Given $t>0$, choose $k=k(t)$ such that $t_k\leq t$ and $t_k\to+\infty$ as $t\to+\infty$. Denote by $\nu^A_{t-t_k}$ the law of $\Psi\xi^A_{t-t_k}$. Since $(\Psi\xi_t^B)_{t\geq0}$ is a Markov process, the Chapman--Kolmogorov equations give
		\begin{align*}
			&\left|\tilde{\mu}(C)-\mathbb{P}_{\lambda_i,\lambda_e}(\Psi\xi_t^A\in C)\right|\\
			&\quad\leq
			\left|\tilde{\mu}(C)-\mathbb{P}_{\lambda_i,\lambda_e}(\Psi\xi_{t_k}^A\in C)\right|\\
			&\qquad+
			\left|\int
			\left[
			\mathbb{P}_{\lambda_i,\lambda_e}(\Psi\xi_{t_k}^A\in C)
			-
			\mathbb{P}_{\lambda_i,\lambda_e}(\Psi\xi_{t_k}^B\in C)
			\right]
			\,d\nu^A_{t-t_k}(B)\right|.
		\end{align*}
		The first term goes to zero by the choice of the subsequence, and the second one goes to zero by Proposition~\ref{prop_uniquenessmeasure}. Thus $\Psi\xi_t^A$ converges weakly to $\tilde{\mu}$.
		
		The limit does not depend on the initial condition. Indeed, if $\Psi\xi_t^A$ converges to $\tilde{\mu}$ and $\Psi\xi_t^B$ converges to $\tilde{\nu}$, then Proposition~\ref{prop_uniquenessmeasure} implies that $\tilde{\mu}$ and $\tilde{\nu}$ agree on all cylinder sets. Hence $\tilde{\mu}=\tilde{\nu}$.
		
		It remains to prove invariance. By Proposition~\ref{prop_fullmeasure}, $\tilde{\mu}$ is supported on $\Sigma^{\odot}$. For every fixed $s\geq0$ and every bounded continuous function $f:\Sigma\to\mathbb{R}$, the map $D\mapsto\mathbb{E}_{\lambda_i,\lambda_e}[f(\Psi\xi_s^D)]$ is continuous at every $D\in\Sigma^{\odot}$. Thus the process seen from the right edge is Feller on $\Sigma^{\odot}$. Since $\Psi\xi_t^A\Rightarrow\tilde{\mu}$ and the limit is independent of $A$, Proposition~I.1.8 of~\cite{Liggett05} gives $\Psi\xi_s^{\tilde{\mu}}\sim\tilde{\mu}$ for every $s\geq0$.
	\end{proof}
	
	The proof of Proposition~\ref{prop_uniquenessmeasure} uses the construction of Galves and Presutti~\cite{GalvesPresutti87}, adapted to the present non-attractive setting. The main difficulty is that the classical coupling between processes started from a single site and from $\mathbb{Z}^-$ does not directly give equality on a fixed window behind the right edge. We replace it by an auxiliary process whose enhanced rate $\lambda_e$ is used only for infections from the current rightmost site to its right neighbour, while all other infection arrows are used at rate $\lambda_i$. For semi-infinite initial configurations this auxiliary process has the same law as the original modified boundary contact process seen from the right edge, because such configurations have a right boundary but no left boundary.
	
	\begin{proof}[{Proof of Proposition~\ref{prop_uniquenessmeasure}.}]
	Let $(\zeta_t)_t$ be the auxiliary process with infection rate $\lambda_e$ only from the right edge to the right, and infection rate $\lambda_i$ everywhere else. More precisely, infections from $\mathcal{R}\zeta_t$ to $\mathcal{R}\zeta_t+1$ occur at rate $\lambda_e$, while all other nearest-neighbor infections occur at rate $\lambda_i$. We denote the law of this process by $\widehat{\mathbb{P}}_{\lambda_i,\lambda_e}$.
			
	Let $\tau_0:=0$ and, for $k\ge1$, let $\tau_k$ be the first time after $\tau_{k-1}$ at which the copy of the auxiliary process started from the origin at time $\tau_{k-1}$ dies out. More explicitly,
		\[\tau_k:=\inf\{t>\tau_{k-1}:\zeta_{\tau_{k-1},t}^{0}=\emptyset\}.\]
	Take $A\in\Sigma^{\odot}$. We define a recentered version $\tilde{\zeta}^A$ recursively on the intervals $[\tau_{k-1},\tau_k)$ by
		\[\tilde{\zeta}^A_t=\zeta_{\tau_{k-1},t}^{\Psi \eta}+\mathcal{R}\eta,\qquad \eta=\tilde{\zeta}^A_{\tau_{k-1}}.\]
	In words, at each time $\tau_{k-1}$ we recenter the current configuration at its right edge, restart the auxiliary process from that recentered configuration using the same graphical construction, and then translate the resulting configuration back to the previous right-edge position. By translation invariance of the graphical construction, $(\Psi\tilde{\zeta}^A_t)_{t\ge0}$ has the same distribution as $(\Psi\zeta^A_t)_{t\ge0}$ under $\widehat{\mathbb{P}}_{\lambda_i,\lambda_e}$. Since $A\in\Sigma^{\odot}$ is semi-infinite to the left, $(\Psi\zeta^A_t)_{t\ge0}$ under $\widehat{\mathbb{P}}_{\lambda_i,\lambda_e}$ has the same distribution as $(\Psi\xi^A_t)_{t\ge0}$ under $\mathbb{P}_{\lambda_i,\lambda_e}$.
	
	Define $N(t)=\sum_{k=1}^{\infty}\mathbf{1}_{\{\tau_k\le t\}}$. Fix $L<+\infty$, $\kappa\in\Sigma^{\odot}$, and consider the cylinder $C=\{D\in\Sigma^{\odot}:D\cap[-L,0]=\kappa\cap[-L,0]\}$. Every finite-dimensional cylinder for the process seen from the right edge can be written in this form for some $L$ and $\kappa$.
	
	For every $A,B\in\Sigma^{\odot}$,
	\begin{equation} \label{eq_difference_cylinder_bound}
		\begin{aligned}
			&|\mathbb{P}_{\lambda_i,\lambda_e}(\Psi\xi_t^A\in C)
			-\mathbb{P}_{\lambda_i,\lambda_e}(\Psi\xi_t^B\in C)|\\
			&\quad=
			|\widehat{\mathbb{P}}_{\lambda_i,\lambda_e}(\Psi\tilde{\zeta}_t^A\in C)
			-\widehat{\mathbb{P}}_{\lambda_i,\lambda_e}(\Psi\tilde{\zeta}_t^B\in C)|\\
			&\quad\leq
			\widehat{\mathbb{P}}_{\lambda_i,\lambda_e}
			\big(\Psi\tilde{\zeta}_t^A\cap[-L,0]
			\neq
			\Psi\tilde{\zeta}_t^B\cap[-L,0]\big).
		\end{aligned}
	\end{equation}
	
	Let $s=\tau_{N(t)}$.  The only point which differs from the classical
	contact process is that the set of allowed arrows may depend on the
	current configuration.  Here this difficulty is removed by the
	one-boundary auxiliary construction: after the recentering time $s$,
	both $\widetilde\zeta^A$ and $\widetilde\zeta^B$ are run from right edge
	$0$ with the same graphical representation, and the descendants of the
	site $0$ use exactly the arrows of the auxiliary process
	$\zeta^0_{s,t}$.  Thus the usual one-dimensional no-crossing argument
	applies to these active paths.  In particular,
	\[
	\left\{
	\Psi\widetilde\zeta^A_t\cap[-L,0]
	\neq
	\Psi\widetilde\zeta^B_t\cap[-L,0]
	\right\}
	\subseteq
	\left\{
	\inf\Psi\zeta^0_{\tau_{N(t)},t}>-L
	\right\}.
	\]
	Indeed, on the complement of the event on the right, the descendants of
	$0$ at time $s$ separate the window $[-L,0]$ from all sites initially to
	the left of $0$.
	
	By definition of $N(t)$, $\zeta^0_{\tau_{N(t)},t}\neq\varnothing$.
	Therefore $\{\inf\Psi\zeta^0_{\tau_{N(t)},t}>-L\}$ is contained in
	$\{0<|\zeta^0_{\tau_{N(t)},t}|\le L\}$, and hence
	\begin{equation}
		\widehat{\mathbb P}_{\lambda_i,\lambda_e}
		\left(
		\Psi\widetilde\zeta^A_t\cap[-L,0]
		\neq
		\Psi\widetilde\zeta^B_t\cap[-L,0]
		\right)
		\le
		\widehat{\mathbb P}_{\lambda_i,\lambda_e}
		\left(
		0<|\zeta^0_{\tau_{N(t)},t}|\le L
		\right).
		\tag{3.2}
		\label{eq_coupling_error}
	\end{equation}
	We now justify that the right-hand side of~\eqref{eq_coupling_error} tends to zero. Fix $L<+\infty$. There exists $\delta_L>0$ such that, for every non-empty finite set $F$ with $|F|\leq L$, the auxiliary process started from $F$ dies out during the next unit time with probability at least $\delta_L$. Indeed, this occurs if all particles in $F$ recover before time $1$ and no infection arrow issued from them is used before their recovery.
	
	Let $\rho=\widehat{\mathbb{P}}_{\lambda_i,\lambda_e}(\zeta_t^0\neq\emptyset\text{ for all }t\geq0)>0$. By the Markov property,
	\[
	\delta_L\,
	\widehat{\mathbb{P}}_{\lambda_i,\lambda_e}
	\big(0<|\zeta_s^0|\leq L\big)
	\leq
	\widehat{\mathbb{P}}_{\lambda_i,\lambda_e}
	\big(\zeta_s^0\neq\emptyset,\ \zeta_{s+1}^0=\emptyset\big).
	\]
	The right-hand side tends to zero as $s\to+\infty$, since $\widehat{\mathbb{P}}_{\lambda_i,\lambda_e}(\zeta_s^0\neq\emptyset)$ decreases to $\rho$. Hence
	\[
	\widehat{\mathbb{P}}_{\lambda_i,\lambda_e}
	\big(|\zeta_s^0|\leq L\,\big|\,\zeta_s^0\neq\emptyset\big)
	\longrightarrow0
	\quad\text{as }s\to+\infty.
	\]
	Since, after the first surviving trial, $N(t)$ is constant and $t-\tau_{N(t)}\to+\infty$, the right-hand side of~\eqref{eq_coupling_error} tends to zero. The convergence is uniform in $A$ and $B$, and the proposition follows
	\end{proof}
	
	The proof of Proposition~\ref{prop_fullmeasure} follows the approach of~\cite{ASS90}. Since the deterministic velocity of the right edge is not available for the non-attractive process used in Theorem~\ref{thm_measure}, we compare it with the classical contact process with parameter $\lambda_i$.
	
	\begin{proof}[{Proof of Proposition~\ref{prop_fullmeasure}}.]
	Let $\xi^-_t$ be the modified boundary contact process with parameters $(\lambda_i,\lambda_e)$ and initial condition $\mathbb{Z}^-$. Let $\zeta^-_t$ be the classical contact process with parameter $\lambda_i$ and the same initial condition. Denote by $\tilde{\mu}_t$ the law of $\Psi\xi^-_t$. For fixed $i,j\in\mathbb{N}$, define
		\[
		A_{i,j}=\{B\in\Sigma:\mathcal{R}B=0 \text{ and } |B\cap[-i,0]|<j\}.
		\]
	
	For every $k\in\mathbb{N}$, the positive part of the increment $\mathcal{R}\xi^-_{k+1}-\mathcal{R}\xi^-_k$ is stochastically dominated by a Poisson random variable with parameter $\lambda_e$, because the right edge can move to the right only through births from the current rightmost site. Thus,
		\begin{equation} \label{eq_expectedgepositive}
			\mathbb{E}_{\lambda_i,\lambda_e}[\mathcal{R}\xi^{-}_{k+1}-\mathcal{R}\xi^{-}_{k}]^{+} \le \lambda_e.
		\end{equation}
	If $\Psi\xi^-_k\in A_{i,j}$, then there are fewer than $j$ infected sites in the interval $[\mathcal{R}\xi^-_k-i,\mathcal{R}\xi^-_k]$. With probability at least $p(j)>0$, depending only on $j$ and on the rates, all those infected sites recover before producing any birth that could keep the right edge inside this interval. On that event, the right edge moves at least $i$ sites to the left during $[k,k+1]$. Hence,
		\begin{equation} \label{eq_boundexpectedgenegative}
			\mathbb{E}_{\lambda_i,\lambda_e}[\mathcal{R}\xi^{-}_{k+1}-\mathcal{R}\xi^{-}_{k}]^{-} \ge ip(j)\tilde{\mu}_k(A_{i,j}).
		\end{equation}
	Multiplying~\eqref{eq_boundexpectedgenegative} by $-1$ and summing with~\eqref{eq_expectedgepositive},
		\begin{equation} \label{eq_boundexpecedge}
			\mathbb{E}_{\lambda_i,\lambda_e}[\mathcal{R}\xi^{-}_{k+1}-\mathcal{R}\xi^{-}_{k}] \le \lambda_e-ip(j)\tilde{\mu}_k(A_{i,j}).
		\end{equation}
	
We couple $(\xi^-_t)_t$ and $(\zeta^-_t)_t$ using the same recovery
marks and the same $\lambda_i$-open infection arrows.  This comparison
does not use attractiveness of the modified boundary process; it is a
pathwise comparison with the classical contact process started from the
same initial condition $\mathbb Z^-$.  Whenever a $\lambda_i$-open arrow
is used by $\zeta^-$, the same arrow is also available to $\xi^-$: if
its initial point is internal for $\xi^-$, it is used at rate
$\lambda_i$, while if it is the right boundary, it is allowed at rate
$\lambda_e>\lambda_i$.  Since the two processes start from the same
configuration, induction over the graphical marks gives	\begin{equation} \label{eq_classical_inside_modified}
		\zeta^-_t\subseteq \xi^-_t
		\qquad\text{for all }t\geq0.
	\end{equation}
Summing~\eqref{eq_boundexpecedge} from $k=0$ to $n-1$ gives
	\begin{equation} \label{eq_boundedge_n}
		\mathbb{E}_{\lambda_i,\lambda_e}[\mathcal{R}\xi_n^-]
		\leq n\lambda_e-ip(j)\sum_{k=0}^{n-1}\tilde{\mu}_k(A_{i,j}).
	\end{equation}
	On the other hand, by~\eqref{eq_classical_inside_modified}, $\mathcal{R}\zeta_n^-\leq\mathcal{R}\xi_n^-$ for every $n$. Hence, by~\eqref{eq_boundedge_n},
	\begin{equation} \label{eq_integraledge}
		\frac{1}{n}\sum_{k=0}^{n-1}\tilde{\mu}_k(A_{i,j})
		\leq
		\frac{\lambda_e}{ip(j)}
		-
		\frac{\mathbb{E}_{\lambda_i}[\mathcal{R}\zeta_n^-]}{ip(j)n}.
	\end{equation}
	Applying Proposition~\ref{prop_uniquenessmeasure},
		\begin{equation} \label{eq_convergenceAij}
			\lim_{n\to\infty} \frac{1}{n}\sum_{k=0}^{n-1} \tilde{\mu}_k(A_{i,j}) = \tilde{\mu}(A_{i,j}).
		\end{equation}
	Moreover, by~\cite[Theorem VI.2.19]{Liggett05}, the right edge of the classical contact process satisfies $\mathcal{R}\zeta_n^-/n\to\alpha(\lambda_i)$ almost surely and in $L^1$, for some finite constant $\alpha(\lambda_i)$. Taking the limit superior as $n\to\infty$ in~\eqref{eq_integraledge} and using~\eqref{eq_convergenceAij}, we obtain
	\begin{equation}\label{eq_measureAij}
	\tilde{\mu}(A_{i,j})
	\leq
	\frac{\lambda_e-\alpha(\lambda_i)}{ip(j)}
	\leq
	\frac{\lambda_e+|\alpha(\lambda_i)|}{ip(j)}.
	\end{equation}
	Since $p(j)>0$ and $\alpha(\lambda_i)$ is finite, the right-hand side tends to zero as $i\to+\infty$.
	Since the events $A_{i,j}$ decrease to $\{B\in\Sigma:\mathcal{R}B=0\text{ and } |B|<j\}$ as $i\to\infty$, taking this limit in~\eqref{eq_measureAij} gives
		\[\tilde{\mu}\big(\{B\in\Sigma:\mathcal{R}B=0\text{ and }|B|<j\}\big)=0.\]
	As this holds for every $j$, the limiting measure is supported on infinite configurations whose right edge is at the origin, that is, $\tilde{\mu}(\Sigma^{\odot})=1$.
	\end{proof}

	\section{Non-survival at criticality} \label{section_criticaldies}
	
	In this section, we prove Theorem~\ref{thm_criticaldies}. The argument follows the one-dimensional block construction of Bezuidenhout and Grimmett~\cite{BG90}. We keep only the parts that are needed on the line.
	
	The main difference from the classical contact process appears in the use of the graphical construction. In the classical process, once the Poisson marks in a space-time box are fixed, the existence of an active path inside that box is a local event. It does not depend on the configuration outside the box. For the modified boundary contact process this is not true in general, because whether an infection arrow is allowed depends on whether its starting point is a boundary site or an internal site, and this may be affected by particles outside the box. In the attractive region $\lambda_i>\lambda_e$, however, this dependence is monotone: adding particles can only turn boundary sites into internal sites, and the corresponding infection rate increases. This is the reason why the one-dimensional block argument of Bezuidenhout and Grimmett can still be adapted to the present model.
	
	We first record the block estimate used in the proof.
	
	\begin{lemma} \label{lemma_connected_interval}
		Suppose that $\lambda_i>\lambda_e$ and that $\mathbb{P}_{\lambda_i,\lambda_e}(\xi^0_t\neq\emptyset \text{ for all }t)>0$. Then, for every $\varepsilon>0$, there are $R,L\in\mathbb{N}$, $S>0$, and $\delta>0$ such that, for every $\tilde\lambda_i,\tilde\lambda_e$ with $\max\{|\lambda_i-\tilde\lambda_i|,|\lambda_e-\tilde\lambda_e|\}<\delta$, the following event has $\mathbb{P}_{\tilde\lambda_i,\tilde\lambda_e}$-probability at least $1-\varepsilon$: for the process started from $[-R,R]$, there exists $(x_0,t_0)\in[L,2L]\times[S,2S]$ such that every point of $[-R+x_0,R+x_0]\times\{t_0\}$ is reached from $[-R,R]\times\{0\}$ by an active path contained in $[-L,3L]\times[0,2S]$. The same statement holds with the reflected box $[-3L,L]\times[0,2S]$ and $x_0\in[-2L,-L]$.
	\end{lemma}
	
	We defer the proof of the lemma to the end of the section.
	
	\begin{proof}[Proof of Theorem~\ref{thm_criticaldies}.]
		Let $(\xi_t)_{t\geq0}$ be the modified boundary contact process with parameters $(\lambda_i,\lambda_e)$, where $\lambda_i>\lambda_e$. We prove the following openness statement. If $\mathbb{P}_{\lambda_i,\lambda_e}(\xi^0_t\neq\emptyset \text{ for all }t>0)>0$, then there is $\delta>0$ such that, for every $\tilde\lambda_i,\tilde\lambda_e$ satisfying $\max\{|\lambda_i-\tilde\lambda_i|,|\lambda_e-\tilde\lambda_e|\}<\delta$, one has $\mathbb{P}_{\tilde\lambda_i,\tilde\lambda_e}(\xi^0_t\neq\emptyset \text{ for all }t>0)>0$. This proves the theorem, since survival at $(\lambda_i,\lambda^e_*(\lambda_i))$ would then imply survival for some smaller external rate.
		
		Choose $\varepsilon>0$ small enough for the one-dependent oriented percolation comparison below. Let $R,L,S$ and $\delta$ be as in Lemma~\ref{lemma_connected_interval}. We take $\delta$ smaller if necessary so that $\tilde\lambda_i>\tilde\lambda_e$ whenever $\max\{|\lambda_i-\tilde\lambda_i|,|\lambda_e-\tilde\lambda_e|\}<\delta$. Thus the perturbed process remains attractive.
		
		When the construction is restarted from an infected translate of $[-R,R]$, there may be other infected particles already present. We do not remove them. Since $\tilde\lambda_i>\tilde\lambda_e$, adding particles can only turn boundary sites into internal sites, and therefore can only increase the set of allowed infection arrows. Hence the probability of the next block event is bounded from below by the probability given in Lemma~\ref{lemma_connected_interval}.
		
		We now compare with one-dependent oriented percolation on the even sublattice of $\mathbb{Z}\times\mathbb{N}$. A site $(m,n)$ represents the presence, at some time in $[2nS,2(n+1)S]$, of an infected translate of $[-R,R]$ centered in the spatial interval $[mL-L,mL+L]$. From such a block, Lemma~\ref{lemma_connected_interval} gives a right block event and a left block event, each with probability at least $1-\varepsilon$. These two events are increasing events of the underlying Poisson graphical construction. Since this construction is a product measure, they are positively correlated by the FKG inequality. Therefore both events occur with probability at least $(1-\varepsilon)^2$.
		
		The events attached to blocks at distance at least two are independent, because they depend on disjoint space-time boxes. Thus the renormalized process dominates a one-dependent oriented percolation with density as close to one as we wish. For $\varepsilon$ small enough, this one-dependent oriented percolation survives with positive probability. On this event, the modified boundary contact process started from an infected translate of $[-R,R]$ also survives.
		
		Finally, starting from a single infected site, there is a positive probability of producing an infected translate of $[-R,R]$ in finite time. It follows that $\mathbb{P}_{\tilde\lambda_i,\tilde\lambda_e}(\xi^0_t\neq\emptyset \text{ for all }t>0)>0$, which proves the openness statement.
	\end{proof}
	
	\begin{proof}[Proof of Lemma~\ref{lemma_connected_interval}.]
		For $R,L\in\mathbb{N}$ and $S>0$, let $G^+(R,L,S)$ be the event that, for the process started from $[-R,R]$, there exist $x_0\in[L,2L]$ and $t_0\in[S,2S]$ such that every point of $[-R+x_0,R+x_0]\times\{t_0\}$ is reached from $[-R,R]\times\{0\}$ by an active path contained in $[-L,3L]\times[0,2S]$. Let $G^-(R,L,S)$ be the reflected event, with the box $[-3L,L]\times[0,2S]$ and $x_0\in[-2L,-L]$. We prove the estimate for $G^+(R,L,S)$; the proof for $G^-(R,L,S)$ is the same by reflection.
		
		We say that $(x,0)$ is connected to $+\infty$ if, for every $t>0$, it is connected to some point $(y,t)$ by active paths for the process started from the singleton $\{x\}$. The reference to the initial condition is part of the definition, since whether an arrow is allowed may depend on the current infected set.
		
		For $x\in\mathbb{Z}$, let $H_x$ be the event that $(x,0)$ is connected to $+\infty$. The events $H_x$ are spatial translates of $H_0$, and by assumption $\mathbf{P}(H_0)>0$. By spatial ergodicity of the graphical construction, a sufficiently large interval contains, with probability arbitrarily close to one, at least one point $x$ for which $H_x$ occurs. Since $\lambda_i>\lambda_e$, the process is attractive; hence, on the event $\bigcup_{x=-R}^R H_x$, the process started from $[-R,R]$ survives forever. We choose $R$ large enough so that $\mathbb{P}_{\lambda_i,\lambda_e}(\xi^{[-R,R]}_t\neq\emptyset \text{ for all }t\geq0)>1-\varepsilon$.
		
		The only extra point, compared with the classical contact process, is
		locality.  In the modified boundary process, the status of an arrow in a
		space-time box may depend on particles outside the box, since such
		particles can change whether the starting point of the arrow is a
		boundary site.  In the present region $\lambda_i>\lambda_e$, however,
		this dependence is monotone in the useful direction.  Adding particles
		outside the box can only turn a boundary site into an internal site, and
		therefore can only increase the rate attached to an infection arrow.
		Thus the crossing events used below are increasing events of the
		graphical construction, and their probabilities are bounded from below
		by the corresponding probabilities when no outside particles are added.
		
		With this observation, the one-dimensional growth estimate from the proof of~\cite[Lemma 7]{BG90} applies. In dimension one, the estimate only has to produce many endpoints on the top of a large space-time rectangle or many well-separated endpoints on one of its sides. Applied to the present graphical construction, with open paths replaced by active paths for the modified boundary process, it gives $L$ and $T$ such that, with probability at least $1-\varepsilon$, the process from $[-R,R]$ reaches many points on the right side of $[-L,L]\times[0,T]$ and many points on the top of the same box.
		
		Choose $h>0$ small. From any one of the well-separated endpoints on the right side, there is a fixed positive probability, depending only on $R,h,\lambda_i,\lambda_e$, of filling a translate of $[-R,R]$ during the next time interval of length $h$. Taking sufficiently many well-separated endpoints and using independence in the corresponding disjoint boxes, the probability that none of these attempts succeeds can be made smaller than $\varepsilon$.
		
		Combining the previous steps, we can find $L$ and $S=T+h$ such that $\mathbb{P}_{\lambda_i,\lambda_e}(G^+(R,L,S))\geq1-4\varepsilon$. Replacing $\varepsilon$ by a smaller number at the beginning gives the desired bound for the original parameters.
		
		Finally, the event $G^+(R,L,S)$ depends only on finitely many Poisson marks in the finite box $[-L,3L]\times[0,2S]$. Its probability is therefore continuous in the two infection rates. We may choose $\delta>0$ so that the same bound holds for all $(\tilde\lambda_i,\tilde\lambda_e)$ with $\max\{|\lambda_i-\tilde\lambda_i|,|\lambda_e-\tilde\lambda_e|\}<\delta$. Taking $\delta$ smaller if necessary also ensures that $\tilde\lambda_i>\tilde\lambda_e$ throughout this neighbourhood.
	\end{proof}
	
	\bibliographystyle{amsplain}
	\bibliography{refbib.bib}
	
	
	


\end{document}